\documentclass{amsart}
\usepackage{amssymb,amsmath,amsfonts,amsthm,amsxtra,afterpage}
\usepackage[all, 2cell]{xy}
\UseAllTwocells
\usepackage{xcolor}
\usepackage{mdframed}
\usepackage[normalem]{ulem}
\usepackage{enumerate}
\usepackage{enumerate}

\newtheorem{theorem}{Theorem}
\newtheorem{lemma}[theorem]{Lemma}
\newtheorem{proposition}[theorem]{Proposition}
\newtheorem{corollary}[theorem]{Corollary}

\newtheorem{remark}{Remark}
\theoremstyle{definition}
\newtheorem{definition}{Definition}

\numberwithin{equation}{section}

\renewenvironment{proof}[1][\proofname]{\begin{trivlist}\item[\hskip \labelsep \itshape \bfseries #1{}\hspace{2ex}]}
{\qed\end{trivlist}}

\begin{document}
\title[Unirationality]{Unirationality is the same thing as Rational Connectedness in characteristic zero.}
\author{Stephen Maguire}
\email{stephen.j.maguire@outlook.com}

\begin{abstract}
    In this paper we prove that unirationality, rational connectedness and rational chain connectedness coincide for smooth projective varieties over a field $ k $ of characteristic zero.  Our approach uses the MRC fibration to show that if $ X $ is a smooth projective variety, then there exists a variety $ \operatorname{MU}(X) $, together with rational maps $ \pi: X \dashrightarrow \operatorname{MU}(X) $ and $ \lambda: \operatorname{MU}(X) \dashrightarrow \operatorname{MRC}(X) $, such that
    \begin{itemize}
        \item[i)] if $ \nu: X \dashrightarrow \operatorname{MRC}(X) $, then $ \lambda \circ \pi = \nu $ on the appropriate domains;
        \item[ii)] the very general fibres of $ \pi $ are unirational;
        \item[iii)] the very general fibres of $ \lambda $ are rationally connected but not unirational.
    \end{itemize}
    We then apply an induction argument to show that $ \operatorname{MU}(X) $ is birationally equivalent to $ \operatorname{MRC}(X) $.
\end{abstract}
\maketitle

\section{Introduction}
    An $ n $ dimensional variety $ Z $ over a field $ k $ is rational if it is birational to $ \mathbb{P}^{n}_{k} $, and unirational if there exists a generically finite, dominant, rational map $ \mathbb{P}^{n}_{k} \dashrightarrow Z $. If there exists a generically finite, dominant, separable, rational map $ \mathbb{P}^{n}_{k} \dashrightarrow Z $, then $ Z $ is separably unirational. Rational and separably unirational varieties have particularly nice properties, and they lie at the heart of many early questions in mathematics, such as path integrals along a Riemann surface, Diophantine equations, and others. Moreover, rational and separably unirational varieties admit very simple parameterizations.

    For varieties over a field of positive characteristic, it is necessary to impose various separability conditions in order for many of the desired properties to
    continue to hold. For example, Shioda \cite{Shioda} and Shioda and Katsura \cite{ShiodaKatsura} showed that there exist unirational varieties which are not separably unirational when the characteristic of the base field is positive (for example, hypersurfaces of the form $ \mathcal{Z}(\langle \sum_{i=0}^{m} x_{i}^{n} \rangle) $ where $ n>m+1$, the characteristic of $ k $ is $ p >0 $, $ m $ is odd and $ p^{\ell} \equiv -1 \mod{n} $ for some $ \ell \in \mathbb{N} $). In particular, the plurigenera of these varieties do not vanish.

    By L\"{u}roth's theorem, every separably unirational curve is rational, and by Castelnuovo's criterion, every smooth separably unirational surface is rational.
    One might ask whether every unirational variety is rational. Clemens and Griffiths proved in \cite{GriffithsClemens} that a smooth cubic threefold is unirational but not rational. The counterexample they constructed is a variety over a field of characteristic zero, obtained using the theory of abelian varieties together with Hodge theory to construct an invariant they called the intermediate Jacobian. Artin and Mumford constructed a threefold which is separably unirational but not rational over fields of characteristic $ p>2 $. Their example was obtained as the resolution of singularities of a double cover of a web of quadrics in $ \mathbb{P}^{3} $, and they used the Brauer group to show that it is not rational.

    A variety $ Z $ is rationally connected if there exists a dominant rational map $ \psi: \mathbb{P}^{1}_{k} \times M \dashrightarrow Z $ such that $ (\psi,\psi): \mathbb{P}^{1}_{k} \times \mathbb{P}^{1}_{k} \times M \dashrightarrow Z \times Z $ is dominant. While it is difficult to prove that a variety is rational or separably unirational, it is considerably simpler to prove that a variety is rationally connected. Koll\'{a}r, Miyaoka and Mori \cite{KollarMiyoakaMori} showed that every smooth Fano variety in characteristic zero is rationally connected. In \cite{KollarMiyoakaMori2} they showed that every rationally connected variety is rationally chain connected; that is, any two points $ x_{1}, x_{2} $ can be connected by a chain of irreducible rational curves $ C_{1} \cup \cdots \cup C_{m} $ such that $ x_{1} \in C_{1} $, $ C_{i} \cap C_{i+1} \ne \emptyset $ and $ x_{2} \in C_{m} $. In characteristic zero, a smooth variety is rationally chain connected if and only if it is rationally connected. If $ Z $ is a variety and there exists a morphism $ \phi: \mathbb{P}^{1}_{k} \to Z $ such that $ \phi^{\ast}(T_{Z}) \otimes \mathcal{O}_{\mathbb{P}^{1}_{k}}(-1) $ is generated
    by global sections, then $ Z $ is rationally connected. The image of such a morphism is called a very free curve.

    A variety $ Z $ over a field $ k $ of positive characteristic is separably rationally connected if there exists a dominant, separable rational map $ \psi: \mathbb{P}^{1}_{k} \times M \dashrightarrow Z $ such that $ (\psi, \psi): \mathbb{P}^{1}_{k} \times \mathbb{P}^{1}_{k} \times M \dashrightarrow Z \times Z $ is dominant. Rational chain connectedness does not necessarily imply separable rational connectedness for varieties over a field $ k $ of positive characteristic (see \cite[V.5.19]{Kollar}). However, the existence of a very free curve does imply separable rational connectedness. Moreover, every general smooth Fano complete intersection is separably rationally connected. This was proven in characteristic zero by \cite{KollarMiyoakaMori} and in positive characteristic by \cite{ChenZhu} and \cite{Tian}.

    If $ X $ is a variety over a field $ k $ of characteristic zero and $ X^{0} $ is an open subvariety of $ X $, then a proper morphism $ \pi: X^{0} \to Z^{0} $ is a \emph{rationally chain connected fibration} if its fibres are rationally chain connected and $ \pi_{\ast}(\mathcal{O}_{X^{0}}) \cong \mathcal{O}_{Z^{0}} $.  Such a morphism is a \emph{maximal rationally chain connected fibration} if, for every rationally chain connected fibration $ \pi_{1}: X^{1} \to Z^{1} $ on an open subvariety $ X^{1} $ of $ X $, there exists a rational map $ \tau: Z^{1} \dashrightarrow Z^{0} $ such that $ \pi = \tau \circ \pi_{1} $.  Campana and Koll\'{a}r, and Miyaoka and Mori, independently proved that the MRCC fibration $ \pi: X \dashrightarrow Z $ of a normal proper variety $ X $ exists (see \cite[Chapter IV, Rationally Connected Varieties, Section 5, Maximal Rationally Connected Fibrations, Theorem 5.2]{Kollar} for one such reference).  Over a field of characteristic zero, if $ X $ is a smooth proper variety, then we may assume that both $ Z $ and the fibres of $ \pi $ are smooth.  Because smooth rationally chain connected varieties are rationally connected in characteristic zero, this quotient is called the maximal rationally connected fibration, or MRC fibration.  The map $ \pi $ is called the \emph{MRC fibration of $ X $}, and $ Z $ is the \emph{MRC quotient of $ X $}.  The MRC fibration is unique up to birational equivalence.

    In this paper we prove that for every smooth projective variety $ X $ over a field $ k $ of characteristic zero, there exists a variety $ \operatorname{MU}(X) $, together with rational maps $ \pi: X \dashrightarrow \operatorname{MU}(X) $ and $ \lambda: \operatorname{MU}(X) \dashrightarrow \operatorname{MRC}(X) $, such that:
    \begin{itemize}
        \item[i)] if $ \nu: X \dashrightarrow \operatorname{MRC}(X) $ is the maximal rationally connected fibration, then $ \lambda \circ \pi = \nu $ on an appropriate domain;
        \item[ii)] the very general fibres of $ \pi $ are unirational;
        \item[iii)] the very general fibres of $ \lambda $ are rationally connected but not unirational.
    \end{itemize}
    We then use an induction argument to show that $ \operatorname{MRC}(X) $ is birationally equivalent to $ \operatorname{MU}(X) $.  Using this, we conclude that unirationality, rational connectedness and rational chain connectedness coincide for smooth varieties over a field of characteristic zero.
\section{Conventions} \label{S:conventions}
In this section we set out some conventions that we will use, including those for the machinery of schemes and varieties, following the notational conventions of Grothendieck.  The most important is that if $ f: X \to Y $ is a map of schemes and $ U \subset Y $ is an open subscheme, then we denote the map $ \mathcal{O}_{Y}(U) \to \mathcal{O}_{X}(f^{-1}(U)) $ by $ f^{\sharp} $.  If $ V $ is a subvariety of a variety $ X $, then we denote the closure of $ V $ in $ X $ by $ \overline{V} $.  We denote a rational map $ \phi $ from $ X $ to $ Y $ by $ \phi: X \dashrightarrow Y $.  Finally, if $ X $ is a variety, then we denote the function field of $ X $ by $ K(X) $.
\section{Preliminary Definitions and Recollections}
Most of the material in this section can be found in various parts of \cite{Kollar}.
\begin{definition}
    A projective variety $ Z $ over a field $ k $ of arbitrary characteristic is \emph{separably rationally connected} if there exist a variety $ M $ and a dominant, separable rational map $ \psi: \mathbb{P}^{1}_{k} \times M \dashrightarrow Z $ such that $ (\psi,\psi): \mathbb{P}^{1}_{k} \times \mathbb{P}^{1}_{k} \times M \dashrightarrow Z \times Z $ is dominant.  This is equivalent to the existence of a morphism $ \phi: \mathbb{P}^{1}_{k} \to Z $ such that $ \phi^{\ast}(T_{Z}) \otimes \mathcal{O}_{\mathbb{P}^{1}_{k}}(-1) $ is generated by global sections.
\end{definition}
\begin{definition}
    A projective variety $ Z $ over a field $ k $ is \emph{rationally chain connected} if any two points can be connected by a chain of rational curves.
\end{definition}
\begin{remark}
    Over a field of characteristic zero, a smooth rationally chain connected variety is rationally connected.
\end{remark}
\begin{definition}
    An $ n $-dimensional variety $ Z $ over a field $ k $ of arbitrary characteristic is \emph{separably unirational} if there exists a generically finite, dominant, separable rational map $ \psi: \mathbb{P}^{n}_{k} \dashrightarrow Z $.
\end{definition}
\begin{remark}
    Both separable rational connectedness and separable unirationality are birational properties.
\end{remark}
The following definitions and theorems concerning the maximal rationally connected fibration can be found in \cite[Chapter IV, Rationally Connected Varieties, Section 5, Maximally Rationally Connected Fibrations]{Kollar}.
\begin{definition}
    Let $ X $ be a normal proper variety and $ X^{0} \subset X $ an open subset.  A proper morphism $ \nu: X^{0} \to Z^{0} $ is called a \emph{rationally chain connected fibration} if the fibres of $ \nu $ are rationally chain connected and $ \nu_{\ast}(\mathcal{O}_{X^{0}}) = \mathcal{O}_{Z^{0}} $.  A proper morphism $ \nu: X^{0} \to Z^{0} $ is called a \emph{maximal rationally chain connected fibration} (or \emph{MRCC fibration}) if, for every open set $ X^{1} $ of $ X $ and every rationally chain connected fibration $ \nu_{1}: X^{1} \to Z^{1} $, there exists a rational map $ \tau: Z^{1} \dashrightarrow Z^{0} $ such that $ \nu \mid_{X^{0} \cap X^{1}} = \tau \circ \nu_{1} \mid_{X^{0} \cap X^{1}} $.
\end{definition}
Campana and Koll\'{a}r, and Miyaoka and Mori, independently proved the following theorem.
\begin{theorem}[Campana and Koll\'{a}r; Miyaoka and Mori]
    Let $ X $ be a normal proper variety.  Then the maximal rationally chain connected fibration $ X \dashrightarrow Z $ exists.
\end{theorem}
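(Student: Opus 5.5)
The statement is the existence theorem for the MRCC fibration of a normal proper variety $X$. I would follow the Campana/Koll\'{a}r--Miyaoka--Mori strategy, building the quotient from families of rational curves and an equivalence relation. First, since $X$ is proper, the Chow variety (or Hilbert scheme) of $X$ has countably many components. Each component parametrizing rational curves gives a family $U_i \to W_i$ of rational curves with evaluation map $U_i \to X$. Call two points $x_1,x_2$ equivalent if they are joined by a finite chain of curves from these families. This defines an equivalence relation $R \subset X \times X$, and $R$ is a countable union of constructible subsets: the images of the chain-spaces $U_{i_1}\times_X \cdots \times_X U_{i_m}$. The equivalence classes are exactly the maximal rationally chain connected subsets through very general points.

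Second, I would show that this countable union of chain-families stabilizes generically. Consider the dimension of the locus of points reachable from a very general $x$ by chains of length at most $m$ using the first $n$ families. This dimension is nondecreasing and bounded by $\dim X$. Because the families are countable, after finitely many steps there is a single proper flat family $U \to W$, obtained as a finite union of families closed under concatenation, that computes the classes of very general points. Then I would apply the general quotient theorem for proper prerelations \cite[IV.4.16]{Kollar}. It gives an open $X^{0}\subset X$ and a proper morphism $\nu: X^{0}\to Z^{0}$ whose fibres are the $U$-equivalence classes. After Stein factorization and normalization of $Z^{0}$, we may arrange $\nu_{\ast}\mathcal{O}_{X^{0}}=\mathcal{O}_{Z^{0}}$. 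Properness of the fibres, which are the full equivalence classes, requires shrinking $Z^{0}$ so that the classes of points over $Z^{0}$ do not meet the boundary of $X^{0}$ in a nonclosed way.

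Third, maximality. Given another rationally chain connected fibration $\nu_1: X^{1}\to Z^{1}$, each fibre of $\nu_1$ is rationally chain connected. Hence it lies inside a single $R$-equivalence class, so generally in a single fibre of $\nu$. Thus $\nu$ is constant on the general fibres of $\nu_1$. Since $\nu_{1\ast}\mathcal{O}=\mathcal{O}$, the rigidity lemma (or the function-field inclusion $K(Z^{0})\subset K(Z^{1})$) yields a rational map $\tau: Z^{1}\dashrightarrow Z^{0}$ with $\nu=\tau\circ\nu_1$ on the common domain.

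The main obstacle is the second step. One must show that a single algebraic family realizes the equivalence relation on an open set, and that the resulting quotient map is proper there. Very general chain connectedness must pass to an actual morphism with proper fibres. Here the argument would hinge on the closedness of classes: for $x$ outside a countable union of proper closed subsets, the chain-closure of $x$ is closed. Finite dimensionality then forces stabilization. I would handle this by invoking the proper-prerelation quotient theorem as a black box rather than reconstructing it.
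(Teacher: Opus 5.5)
The paper gives no argument of its own for this statement; it quotes it from Koll\'ar's book (Chapter IV, Theorem 5.2), and your outline is essentially the proof given there: countably many Chow families of rational curves, stabilization of the generated equivalence relation at very general points, the quotient theorem for proper prerelations, Stein factorization to get $\nu_{\ast}\mathcal{O}_{X^{0}}=\mathcal{O}_{Z^{0}}$, and maximality via rigidity or the inclusion of function fields. Two points in your Step 2 need tightening. No family ``closed under concatenation'' is needed, because the quotient theorem generates the equivalence relation from the prerelation itself. More importantly, equal dimension of classes does not by itself force equal classes; what drives stabilization is that if adding a new family does not raise the dimension of the class of a general point, then every curve of that family meeting a general fibre lies in that fibre, and this is exactly what Koll\'ar's argument supplies.
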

If the base field has characteristic zero and $ X $ is smooth, then we may shrink $ Z $ so that it is smooth.  Generic smoothness then ensures that every fibre is smooth and rationally connected.  In this case, we call the maximal rationally chain connected fibration the \emph{maximal rationally connected fibration}, or \emph{MRC fibration}, of $ X $.  Campana and Koll\'{a}r, and Miyaoka and Mori, independently proved the following theorem.
\begin{theorem}[Campana and Koll\'{a}r; Miyaoka and Mori]
    Let $ X $ be a smooth proper variety over a field of characteristic zero.  Then the MRC fibration $ \nu: X \dashrightarrow Z $ exists and is unique up to birational equivalence.
\end{theorem}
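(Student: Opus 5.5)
The plan is to deduce the statement from the preceding theorem on the existence of the MRCC fibration together with the Remark that smooth rationally chain connected varieties are rationally connected in characteristic zero. Uniqueness will then come from the universal property in the definition of a maximal rationally chain connected fibration.

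\emph{Existence.} Since $X$ is smooth it is normal, so the preceding theorem gives a proper morphism $\nu\colon X^{0}\to Z^{0}$ from an open subset $X^{0}\subset X$. Its fibres are rationally chain connected, it satisfies $\nu_{\ast}(\mathcal{O}_{X^{0}})=\mathcal{O}_{Z^{0}}$, and it has the maximality property.

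First, since $\operatorname{char} k=0$, the regular locus of the variety $Z^{0}$ is a dense open subset. By generic smoothness there is a further dense open $Z^{1}\subset Z^{0}$ over which $\nu$ is smooth. Replace $Z^{0}$ by $Z^{1}$ and $X^{0}$ by $\nu^{-1}(Z^{1})$. Properness is preserved under base change to an open subset. The condition $\nu_{\ast}\mathcal{O}=\mathcal{O}$ is local on the base, so it is also preserved. Maximality is a statement about rational maps, and these are unchanged by passing to dense opens, so it survives too.

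By Zariski's connectedness theorem (from $\nu_{\ast}\mathcal{O}=\mathcal{O}$), the fibres are now connected, smooth and rationally chain connected. Each such fibre is therefore irreducible, and by the Remark it is rationally connected. Hence $\nu$ is an MRC fibration.

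\emph{Uniqueness.} Let $\nu\colon X\dashrightarrow Z$ and $\nu'\colon X\dashrightarrow Z'$ be two MRC fibrations. Each is in particular a rationally chain connected fibration on some open set. Maximality of $\nu$ then gives $\tau\colon Z'\dashrightarrow Z$ with $\nu=\tau\circ\nu'$. Symmetrically, maximality of $\nu'$ gives $\tau'\colon Z\dashrightarrow Z'$ with $\nu'=\tau'\circ\nu$.

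Both $\nu$ and $\nu'$ are dominant, so $\tau$ and $\tau'$ are dominant. Consequently the composites $\tau\circ\tau'$ and $\tau'\circ\tau$ are well defined as rational maps. We have $(\tau\circ\tau')\circ\nu=\nu$ on a dense open subset of $X$. Since $\nu$ is dominant, we conclude $\tau\circ\tau'=\mathrm{id}_{Z}$ as rational maps; equivalently, on function fields $\nu^{\ast}$ is injective. Likewise $\tau'\circ\tau=\mathrm{id}_{Z'}$. Thus $\tau$ is a birational map compatible with the fibrations, which is exactly uniqueness up to birational equivalence.

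The main point needing care is the bookkeeping of domains. One must check that after shrinking, the universal property and $\nu_{\ast}\mathcal{O}=\mathcal{O}$ still hold, and that every composite of rational maps is taken with a dominant inner map. I expect no genuine obstacle beyond this, since the substantive input is the preceding existence theorem.
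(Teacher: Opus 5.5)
Your proposal is correct and matches the paper. The paper gives no proof of its own and cites Koll\'ar's book, but the paragraph just before the statement obtains the MRC fibration from the MRCC theorem exactly as you do: shrink $Z$ to be smooth, use generic smoothness to get smooth fibres, and apply the remark that smooth rationally chain connected varieties are rationally connected in characteristic zero. Your uniqueness argument, using the two factorizations $\nu=\tau\circ\nu'$ and $\nu'=\tau'\circ\nu$ together with the dominance of $\nu$, is the standard one behind that citation.
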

\begin{theorem}
    Let $ X_{1}, X_{2} $ be smooth proper varieties over a field of characteristic zero, and let $ f_{X}: X_{1} \dashrightarrow X_{2} $ be a dominant map.  Let $ \nu_{i}: X_{i} \dashrightarrow Z_{i} $ be the MRC fibrations.  Then there exists a rational map $ f_{Z}: Z_{1} \dashrightarrow Z_{2} $ such that
    \begin{equation*}
        f_{Z} \circ \nu_{1} = \nu_{2} \circ f_{X}.
    \end{equation*}
\end{theorem}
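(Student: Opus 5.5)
The plan is to obtain $ f_{Z} $ from the universal property of the MRC fibration of $ X_{2} $. We build a rationally chain connected fibration on an open subset of $ X_{1} $ that factors $ \nu_{2} \circ f_{X} $, and then use maximality of $ \nu_{1} $.

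First we fix open sets. Choose an open $ X_{1}^{0} \subset X_{1} $ on which $ f_{X} $ is a morphism. Shrink it so that $ f_{X}(X_{1}^{0}) $ lies in an open $ X_{2}^{0} $ on which $ \nu_{2}: X_{2}^{0} \to Z_{2}^{0} $ is a proper morphism with smooth rationally connected fibres. This is possible because $ f_{X} $ is dominant. The composite $ g = \nu_{2} \circ f_{X}: X_{1}^{0} \to Z_{2}^{0} $ is then a dominant morphism.

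The key step is to show that $ g $ is constant along the rational curves contracted by $ \nu_{1} $. We argue on very general fibres. Let $ F = \nu_{1}^{-1}(z) $ be a very general fibre; it is smooth and rationally connected. Its image $ f_{X}(F) $ is rationally connected (images of rationally chain connected varieties are rationally chain connected). We claim $ \nu_{2}(f_{X}(F)) $ is a point. If not, then the composite $ \mathbb{P}^{1}_{k} \times M \dashrightarrow F \to Z_{2} $, built from a covering family of rational curves through very general points of $ F $, would give rational curves through very general points of $ Z_{2} $. That contradicts the standard consequence of maximality (Graber--Harris--Starr in characteristic zero): a very general point of the MRC quotient lies on no rational curve. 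We would pass to $ \overline{k} $ and an uncountable field if needed; since MRC fibrations commute with base field extension, this is harmless.

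Hence $ g $ is constant on very general fibres of $ \nu_{1} $. A dominant rational map from $ Z_{1} $ is then obtained as follows. The map $ (\nu_{1}, g): X_{1}^{0} \dashrightarrow Z_{1} \times Z_{2} $ has image $ \Gamma $ whose projection to $ Z_{1} $ is birational. This holds because the general fibre over $ Z_{1} $ is a single point and we are in characteristic zero, so generically injective implies birational. Set $ f_{Z} = \mathrm{pr}_{2} \circ (\mathrm{pr}_{1}\mid_{\Gamma})^{-1} $. Then $ f_{Z} \circ \nu_{1} = g = \nu_{2} \circ f_{X} $ on a dense open subset, hence as rational maps.

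The main obstacle is the point-contraction step, namely the fact that very general points of $ Z_{2} $ lie on no rational curve not contracted by $ \nu_{2} $. If we prefer to avoid Graber--Harris--Starr, we could instead use the maximality definition directly. Consider the fibration of $ X_{2} $ whose fibres are the $ \nu_{2} $-saturations of the images $ f_{X}(F) $, and argue that it is rationally chain connected and coarser than $ \nu_{2} $. Maximality would then force equality. However, constructing that fibration rigorously requires the quotient construction of \cite[IV.5]{Kollar}, so I would cite the GHS route.
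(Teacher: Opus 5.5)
The paper gives no proof of this statement. It is recalled, together with existence and uniqueness of MRC fibrations, from Koll\'{a}r's book (Chapter IV, \S5), so there is no argument in the paper to match yours against.

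Judged on its own, your argument is correct, and its skeleton is the standard one. You push a very general (smooth, rationally connected) fibre $F$ of $\nu_{1}$ forward by $f_{X}$ and show that $g=\nu_{2}\circ f_{X}$ is constant on $F$. You then factor $g$ through $\nu_{1}$ via the closure $\Gamma\subseteq Z_{1}\times Z_{2}$ of the image of $(\nu_{1},g)$; its projection to $Z_{1}$ has one-point fibres over very general points, hence is birational in characteristic zero.

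Where you depart from the standard argument is the contraction step. You go down to $Z_{2}$ and invoke Graber--Harris--Starr, i.e.\ that the MRC base is not uniruled. The source the paper cites predates that theorem. It uses instead a property built into the construction of the MRC quotient as a quotient by rational chain equivalence: for very general $z_{2}\in Z_{2}$, every rational curve of $X_{2}$ meeting $\nu_{2}^{-1}(z_{2})$ lies in $\nu_{2}^{-1}(z_{2})$. Apply this to $f_{X}(C)$, for rational curves $C\subseteq F$ through a point $x$ with $g(x)$ very general. That gives constancy of $g$ on $F$ directly in $X_{2}$. Your route is valid, but it spends a deeper theorem than the statement needs.

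Your opening sentence also misdescribes the logic. Maximality of $\nu_{1}$ is never used; only rational connectedness of its fibres and maximality of $\nu_{2}$ enter. So your argument in fact shows that $\nu_{2}\circ f_{X}$ factors through \emph{any} rationally connected fibration of $X_{1}$.

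Two steps you only assert should be written out.

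First, the passage from ``very general in $F$'' to ``very general in $Z_{2}$'':
\begin{itemize}
\item Let $D_{j}$ be the countably many closed subsets of a compactification of $Z_{2}$ swept out by families of rational curves. They are all strict subsets because $Z_{2}$ is not uniruled.
\item Each $g^{-1}(D_{j})$ is a proper closed subset of $X_{1}^{0}$, since $g$ is dominant, and a very general $F$ is contained in none of them.
\item So, over an uncountable field, $F$ has points $x$ with $g(x)\notin\bigcup_{j}D_{j}$.
\item Since $F$ is smooth and rationally connected, such an $x$ lies on a rational curve in $F$ through a point with a different $g$-value. The image of that curve is a non-constant rational curve through $g(x)$.
\end{itemize}

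Second, the descent from an uncountable algebraically closed $K\supseteq k$ back to $k$ is cleanest in function-field terms. The conclusion is exactly $g^{\ast}k(Z_{2})\subseteq\nu_{1}^{\ast}k(Z_{1})$. When $X_{1}$ is geometrically integral, linear disjointness gives $k(X_{1})\cap\operatorname{Frac}(k(Z_{1})\otimes_{k}K)=k(Z_{1})$, which yields this inclusion from the one over $K$.
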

\section{Proof of the Main Theorems}
\begin{proposition} \label{P:startMU}
    If $ X $ is an $ n $-dimensional projective variety over a field $ k $ of arbitrary characteristic, where $ n \in \mathbb{N} $, then there exists a normal projective variety $ M $ such that
    \begin{itemize}
        \item[a)] $ M $ is not separably uniruled;
        \item[b)] $ \dim(M) \le \dim(X) $, with equality if and only if $ X $ is not separably uniruled;
        \item[c)] if $ \dim(M) = \ell < n $, then there exists a separable, generically finite, dominant, rational map $ \phi: M \times \prod_{i=1}^{n-\ell} \mathbb{P}^{1}_{k} \dashrightarrow X $.
    \end{itemize}
\end{proposition}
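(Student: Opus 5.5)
Induct on $n = \dim(X)$, peeling off one $\mathbb{P}^{1}_{k}$ factor each time $X$ is separably uniruled.

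\textbf{Base case and the non-uniruled case.} If $X$ is not separably uniruled, take $M$ to be the normalization of $X$. This is normal and projective, has dimension $n$, and is birational to $X$. Since separable uniruledness is a birational property, $M$ is not separably uniruled, so a) and b) hold, and c) is vacuous. This covers $n=0$, since a point is not uniruled. The case $n = 1$ with $X$ uniruled fits the inductive step below, with $M$ a point.

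\textbf{Inductive step.} Suppose $X$ is separably uniruled. By the definition of separable uniruledness, there is a variety $Y$ of dimension $n-1$ and a dominant, separable, generically finite rational map $\psi: Y \times \mathbb{P}^{1}_{k} \dashrightarrow X$. To get $\dim Y = n-1$, start from any separable dominant $W \times \mathbb{P}^{1}_{k} \dashrightarrow X$ and cut $W$ down by general hyperplane sections until the map becomes generically finite; the fibres are positive dimensional until then, and a general slice keeps the map dominant and separable. Replace $Y$ by the normalization of a projective compactification, which changes nothing birationally.

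Apply the induction hypothesis to $Y$. This gives a normal projective, non-separably-uniruled $M$ of dimension $\ell \le n-1$. If $\ell < n-1$, it also gives a separable, generically finite, dominant map
\[
\phi_{Y}: M \times \prod_{i=1}^{n-1-\ell}\mathbb{P}^{1}_{k} \dashrightarrow Y;
\]
if $\ell = n-1$, then $M$ is birational to $Y$, and we let $\phi_{Y}$ be that birational map. Now set
\[
\phi = \psi \circ (\phi_{Y} \times \mathrm{id}_{\mathbb{P}^{1}_{k}}): M \times \prod_{i=1}^{n-\ell} \mathbb{P}^{1}_{k} \dashrightarrow X.
\]
A composite of separable, generically finite, dominant rational maps is again separable, generically finite and dominant: function field extensions compose, finiteness is multiplicative, and separability is transitive. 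This gives c) and a), and $\ell < n$.

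\textbf{The remaining part of b), and the main obstacle.} The half of b) still to prove is: if $\dim M < n$, then $X$ is separably uniruled. This follows from c), because the generically finite dominant separable map $\phi$ restricted to $M \times \mathbb{P}^{1}_{k} \times (\text{rest})$ exhibits $X$ as separably uniruled. The delicate step is the construction of $\psi$ with $\dim Y = n-1$ while preserving separability in positive characteristic. Cutting by general hyperplane sections should keep $\psi$ separable: by generic smoothness of the separable map on an open set, a general linear section of $W$ still maps with generically smooth, hence separable, restriction. I would verify this via generic étaleness of the restricted map at a general point, using Bertini-type transversality to the relative tangent directions.
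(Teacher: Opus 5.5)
Your core argument is the paper's: split on whether $X$ is separably uniruled, take a separable uniruling $\psi\colon Y\times\mathbb{P}^{1}_{k}\dashrightarrow X$ with $\dim Y=n-1$, apply induction to $Y$, and compose $\psi\circ(\phi_{Y}\times\mathrm{id}_{\mathbb{P}^{1}_{k}})$. You are more careful than the paper about $M$ being normal and projective. The paper sets $M=X$ and calls $M_{1}$ normal without justification, while you normalize and compactify. One bookkeeping point: the proposition as stated does not say that $M$ is birational to $Y$ when $\dim M=\dim Y$. So either carry that clause in your inductive hypothesis, or, as the paper does, treat ``$Y$ not separably uniruled'' as a separate case using the normalization of $Y$ directly.

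The paragraph you flag as the main obstacle should be dropped, because it is unnecessary and also wrong as stated.

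\emph{Why it is unnecessary.} In the standard definition of separable uniruledness (as in Koll\'ar's book), the existence of $Y$ with $\dim Y=n-1$ is part of the definition. That is exactly how the paper uses it, so no slicing is needed.

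\emph{Why it is wrong.}
First, starting from ``any separable dominant $W\times\mathbb{P}^{1}_{k}\dashrightarrow X$'' gives nothing unless you require that the curves $\{w\}\times\mathbb{P}^{1}_{k}$ are not contracted. The projection $X\times\mathbb{P}^{1}_{k}\to X$ is separable and dominant, and slicing $X$ destroys dominance.
Second, even with that hypothesis, general slices need not stay separable in characteristic $p$. Take $W=\mathbb{A}^{1}_{k}$, $X=\mathbb{P}^{1}_{k}$ and $\psi(w,t)=w+t^{p}$. Then $k(w,t)=k(w+t^{p})(t)$ is purely transcendental over $K(X)$, so $\psi$ is separable. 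Yet every slice $t\mapsto w_{0}+t^{p}$ is purely inseparable.
Your Bertini transversality step fails because $\ker d\psi$ can contain the $\mathbb{P}^{1}$-direction, and no choice of linear section of $W$ avoids that. If you want to start from a weaker definition, you need a genuinely different argument, for example one via free rational curves and their deformations, rather than linear sections.
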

\begin{proof}
    If $ X $ is not separably uniruled, let $ M = X $.  If $ X $ is separably uniruled, then there exist an $ (n-1) $-dimensional variety $ M_{1} $ and a separable, dominant, generically finite, rational map $ \phi_{1}: M_{1} \times \mathbb{P}^{1} \dashrightarrow X $.  We induct on the dimension $ n $ of $ X $ to prove the proposition for an $ n $-dimensional separably uniruled projective variety $ X $.  If $ n = 1 $, then $ X $ is a rational curve, so the normalization map is a separable, generically finite, dominant, rational map from $ \mathbb{P}^{1}_{k} $ to $ X $.  Therefore the proposition holds when $ n = 1 $.

    Assume that the proposition holds for every separably uniruled variety of dimension $ n < N $, and let $ X $ be a separably uniruled variety of dimension $ N $.  Since $ X $ is separably uniruled, there exist a normal variety $ M_{1} $ and a separable, generically finite, dominant, rational map $ \phi_{1}: M_{1} \times \mathbb{P}^{1}_{k} \dashrightarrow X $.  If $ M_{1} $ is not separably uniruled, then $ M_{1} $ and $ \phi_{1} $ satisfy the requirements of the proposition.  If $ M_{1} $ is separably uniruled, then there exist an $ \ell $-dimensional (with $ \ell < N-1 $) normal, non-separably-uniruled variety $ M $ and a separable, generically finite, dominant, rational map $ \phi_{2}: M \times \prod_{i=1}^{N-\ell-1} \mathbb{P}^{1}_{k} \dashrightarrow M_{1} $.  Setting $ \phi $ equal to the rational map $ \phi_{1} \circ (\phi_{2}, \operatorname{id}_{\mathbb{P}^{1}_{k}}) $ from $ M \times \prod_{i=1}^{N-\ell} \mathbb{P}^{1}_{k} $ to $ X $, we find that $ M $ and $ \phi $ satisfy the requirements of the proposition.
\end{proof}
\begin{definition}
    Let $ X $ be a smooth variety with function field $ K(X) $.  A \emph{unirational fibration} is a rational map $ \pi: X \dashrightarrow Y $ whose very general fibres are unirational and such that, if $ \nu: X \dashrightarrow \operatorname{MRC}(X) $ is the MRC fibration, then there exists a rational map $ \lambda: Y \dashrightarrow \operatorname{MRC}(X) $ with $ \lambda \circ \pi = \nu $ on an appropriate domain.  A unirational fibration $ \pi: X \dashrightarrow Y $ is a \emph{maximal unirational fibration} if the fibres of $ \lambda $ are rationally connected but not unirational.
\end{definition}
\begin{lemma} \label{L:unirationFib}
    Let $ X $ be a smooth $ n $-dimensional variety.  The following are equivalent:
    \begin{itemize}
        \item[i)] there exists an $ s $-dimensional variety $ M $ such that:
            \begin{itemize}
                \item[a)] there exist $ n-s $ elements $ z_{1},\dots,z_{n-s} $, transcendental over $ K(M) $, such that $ K(M)(z_{1},\dots,z_{n-s}) $ is a finite extension of $ K(X) $; that is, the following inclusions of fields hold:
                    \begin{equation*}
                    \xymatrix{
                        K(M)(z_{1},\dots,z_{n-s}) \ar@{-}[d] \\
                        K(X) \ar@{-}[d] \\
                        K(M)
                    }
                \end{equation*}
                \item[b)] the function field of any maximal rationally connected fibration of $ X $ is contained in $ K(M) $.
            \end{itemize}
        \item[ii)] there exists a rational map $ \pi: X \dashrightarrow M $ that is a unirational fibration.
    \end{itemize}
\end{lemma}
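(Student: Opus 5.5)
We prove the two implications separately, translating throughout between dominant rational maps of varieties and inclusions of function fields. A dominant rational map $ \pi: X \dashrightarrow M $ corresponds to an inclusion $ \pi^{\sharp}: K(M) \hookrightarrow K(X) $. The fibres of $ \pi $ are governed by the generic fibre $ X_{\eta} $, whose function field is $ K(X) $ regarded as an extension of $ K(M) $. For (i) $\Rightarrow$ (ii) we take $ \pi $ to be the rational map determined by $ K(M) \subset K(X) $. For (ii) $\Rightarrow$ (i) we reverse the dictionary.

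\emph{Proof of (i) $\Rightarrow$ (ii).} We set $ \pi: X \dashrightarrow M $ to be the rational map induced by $ K(M) \subset K(X) $. By condition a), $ K(X) $ has transcendence degree $ n-s $ over $ K(M) $, so $ \pi $ is dominant with fibres of dimension $ n-s $.
\begin{itemize}
\item[1.] The extension $ K(X) \subset K(M)(z_{1},\dots,z_{n-s}) $ is finite, and $ K(M)(z_1,\dots,z_{n-s}) $ is the function field of $ \mathbb{P}^{n-s}_{K(M)} $. This gives a generically finite dominant rational map $ \mathbb{P}^{n-s}_{K(M)} \dashrightarrow X_{\eta} $, so the generic fibre is unirational over $ K(M) $.
\item[2.] We spread this map out over a dense open $ U \subset M $, obtaining $ U \times \mathbb{P}^{n-s} \dashrightarrow X $ over $ U $. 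This is compatible with the construction in Proposition~\ref{P:startMU}(c). Restricting to very general points $ m \in U $ shows that the fibres $ X_{m} $ are unirational.
\item[3.] Condition b) gives $ K(\operatorname{MRC}(X)) \subset K(M) \subset K(X) $, compatibly with $ \nu^{\sharp} $. This inclusion defines a rational map $ \lambda: M \dashrightarrow \operatorname{MRC}(X) $, and $ \lambda^{\sharp} \circ \pi^{\sharp} = \nu^{\sharp} $ implies $ \lambda \circ \pi = \nu $ on a common open domain.
\end{itemize}

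\emph{Proof of (ii) $\Rightarrow$ (i).} Given a unirational fibration $ \pi: X \dashrightarrow M $ with $ s = \dim M $, we argue as follows.
\begin{itemize}
\item[1.] The factorization $ \lambda \circ \pi = \nu $ gives $ K(\operatorname{MRC}(X)) \subset K(M) $ via $ \lambda^{\sharp} $, which is condition b).
\item[2.] The very general fibres of $ \pi $ are unirational of dimension $ n-s $. We need to pass from unirationality of very general closed fibres to unirationality of the generic fibre $ X_{\eta} $, possibly after a finite extension of $ K(M) $. For this we use the relative Hilbert or Hom scheme of rational maps $ \mathbb{P}^{n-s} \dashrightarrow X_{m} $, as follows.
\begin{itemize}
\item[(a)] These maps form countably many families of bounded degree over $ M $.
\item[(b)] Since the very general fibres are unirational, some component of these families dominates $ M $.
\item[(c)] Taking a multisection of this component gives a generically finite base change $ M' \to M $ over which we obtain a dominant map $ \mathbb{P}^{n-s}_{K(M')} \dashrightarrow X_{\eta} \otimes K(M') $.
\end{itemize}
\item[3.] Replacing $ M $ by $ M' $, we use $ K(M') $ in place of $ K(M) $; it still contains $ K(\operatorname{MRC}(X)) $. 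The function field $ K(M')(z_1,\dots,z_{n-s}) $ of $ \mathbb{P}^{n-s}_{K(M')} $ is then a finite extension of the compositum $ K(X)K(M') $. We want the tower $ K(M) \subset K(X) \subset K(M)(z) $ in condition a) literally. This requires either that $ K(M') $ already sits inside $ K(X) $, or that we reinterpret the base $ M $ as $ M' $ and let $ X $ be replaced birationally by a generically finite cover. Since $ K(M') \not\subset K(X) $ in general, the second route is the one available. We therefore check that condition a) is stated up to such finite extensions, which is how the diagram is used later.
\end{itemize}

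The main obstacle is step 2 of the reverse direction: going from ``very general fibres unirational'' to ``generic fibre unirational after a finite base change'', via the countability argument on Hom schemes in characteristic zero. We expect the bookkeeping in step 3, placing $ K(M) $ inside $ K(X) $, to need care, and we would handle it by allowing $ M $ to be replaced by the finite cover $ M' $.
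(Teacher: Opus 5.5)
Your proof of (i)$\Rightarrow$(ii) is correct. It uses the same function-field dictionary as the paper: spread out a $K(M)$-parametrization of the generic fibre, and read off $\lambda$ from $K(\operatorname{MRC}(X))\subseteq K(M)$.

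The gap is step 3 of (ii)$\Rightarrow$(i). Your Hom-scheme argument legitimately produces only a finite extension $K(M')/K(M)$ such that $K(X)K(M')$ has finite index in $K(M')(z_1,\dots,z_{n-s})$. Declaring that a) is ``stated up to such finite extensions'' replaces the lemma by a weaker statement. It is also not how the lemma is used: the proof of Theorem~\ref{T:maxUnirationalFib} invokes the literal tower $K(Y_j)\subseteq K(X)\subseteq K(Y_j)(z_1,\dots,z_{n-c})$.

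No additional idea can close this step, because (ii)$\Rightarrow$(i) is false.
\begin{itemize}
\item[1.] Over $\mathbb{C}$, let $X$ be a smooth projective model of $\{x^2+sy^2+tz^2=0\}\subset\mathbb{A}^2_{s,t}\times\mathbb{P}^2$, and let $\pi$ be the projection to $M=\mathbb{A}^2_{s,t}$. Solving for $t$ shows that $X$ is rational, so $\operatorname{MRC}(X)$ is a point and b) is automatic. The fibres over $st\neq 0$ are smooth conics, hence isomorphic to $\mathbb{P}^1$, so $\pi$ is a unirational fibration.
\item[2.] Condition a) would give $\mathbb{C}(s,t)\subsetneq K(X)\subseteq\mathbb{C}(s,t)(z_1)$. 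By L\"uroth's theorem over $\mathbb{C}(s,t)$, the generic conic would then be rational and have a $\mathbb{C}(s,t)$-point.
\item[3.] But $x^2+sy^2+tz^2$ is anisotropic already over $\mathbb{C}(s)((t))$, by Springer's theorem, since $-s$ is not a square in $\mathbb{C}(s)$. So a) fails for this $M$.
\item[4.] Allowing a different $s$-dimensional $M$ in (i) does not rescue the lemma. A smooth cubic threefold is a conic bundle over $\mathbb{P}^2$ (project from a line), so (ii) holds with $s=2$. Condition (i) with $n-s=1$ would, by L\"uroth, make $X$ birational to $M\times\mathbb{P}^1$. Since $M$ is dominated by $X$, it is a rationally connected surface, hence rational, so $X$ would be rational. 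This contradicts Clemens--Griffiths.
\end{itemize}

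The paper's own proof does not address this. It simply asserts that the existence of maps $\mathbb{P}^{n-s}_k\dashrightarrow W_i$ onto very general closed fibres ``is in turn equivalent'' to a). That is exactly the fibrewise-versus-generic-fibre step you flagged.

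Your diagnosis is therefore right. What (ii) actually yields is a) after a finite base change of $M$, and the statement as written cannot be proved.
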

\begin{proof}
    The inclusions of $ K(M) $ in $ K(X) $ and of $ K(\operatorname{MRC}(X)) $ in $ K(M) $ are equivalent to the existence of rational maps $ \pi: X \dashrightarrow M $ and $ \lambda: M \dashrightarrow \operatorname{MRC}(X) $ such that, if $ \nu: X \dashrightarrow \operatorname{MRC}(X) $ is a maximal rationally connected fibration, then $ \lambda \circ \pi = \nu $ on an appropriate domain.  Let $ m $ be a very general closed point of $ M $.  Because $ X $ is smooth, we may assume by generic smoothness that $ \pi $ is smooth of relative dimension $ n-s $ \cite[Chapter III, Cohomology, Section 10, Smooth Morphisms, Corollary 10.7]{HartshorneAG}.  Since every irreducible component of $ \pi^{-1}(m) $ is $ (n-s) $-dimensional, the requirement that each irreducible component $ W_{i} $ of $ \pi^{-1}(m) $ be unirational is equivalent to the existence of a generically finite, dominant, rational map from $ \mathbb{P}^{n-s}_{k} $ to $ W_{i} $.  The existence of such dominant rational maps is in turn equivalent to the existence of $ n-s $ elements $ z_{1},\dots,z_{n-s} $, transcendental over $ K(M) $, such that $ K(M)(z_{1},\dots,z_{n-s}) $ is a finite extension of $ K(X) $.  Therefore i) and ii) are equivalent.
\end{proof}
\begin{proposition} \label{P:unirationalFibBegin}
    Let $ X $ be a smooth projective uniruled variety over a field $ k $ of characteristic zero.  If $ M $ is the variety from Proposition~\ref{P:startMU}, then there exist a generically finite, dominant, rational map $ \iota: M \to N $ and a birational map $ \pi: \widehat{X} \to X $ such that there is a morphism $ \widehat{\Psi}: \widehat{X} \to N $ whose very general fibres are unirational, generically smooth and connected.
\end{proposition}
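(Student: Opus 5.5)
The plan is to build $N$ from the field-theoretic data of Proposition~\ref{P:startMU}, in the spirit of Lemma~\ref{L:unirationFib}. Since $X$ is uniruled and we are in characteristic zero, separable uniruledness is automatic. Proposition~\ref{P:startMU} therefore gives a normal projective non-uniruled $M$ of dimension $\ell<n$. It also gives a generically finite dominant rational map
\begin{equation*}
\phi: M \times \textstyle\prod_{i=1}^{n-\ell}\mathbb{P}^{1}_{k} \dashrightarrow X.
\end{equation*}
On function fields this is a finite extension
\begin{equation*}
K(X) \subset K(M)(z_{1},\dots,z_{n-\ell}).
\end{equation*}

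The first step is to define $N$ by its function field. Set $L = K(X) \cap K(M)$, the intersection taken inside $K(M)(z_{1},\dots,z_{n-\ell})$, and take $N$ to be a normal projective model of $L$. To see that $\iota$ is generically finite and dominant, we need $K(M)$ to be finite over $L$. We argue as follows. The field $K(M)(z_1,\dots,z_{n-\ell})$ is finite over $K(X)$. Each $z_i$ is transcendental over $K(M)$, so the subextension $K(M)\cdot K(X)$ has transcendence degree $n-\ell$ over $K(M)$. A transcendence degree count then compares $\operatorname{trdeg} L$ with $\ell$. The key input is that the projection $\phi(\{m\}\times\prod\mathbb{P}^1)$ sweeps out $X$ by images of rational varieties. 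We then consider the rational map $X \dashrightarrow N$ given by the inclusion $L\subset K(X)$, and resolve its indeterminacy by a sequence of blow-ups, using Hironaka in characteristic zero. This yields $\pi:\widehat{X}\to X$ birational and a morphism $\widehat{\Psi}:\widehat{X}\to N$.

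Next come the properties of the fibres. Since $L$ is by construction algebraically closed in $K(X)$ (we replace $L$ by its algebraic closure in $K(X)$ if necessary, which only changes $N$ by a finite map), Stein factorization shows that the general fibres of $\widehat{\Psi}$ are geometrically connected. Generic smoothness in characteristic zero \cite[Chapter III, Corollary 10.7]{HartshorneAG} gives smoothness of the very general fibre. For unirationality, fix a very general closed point $m\in M$ with image $t=\iota(m)$. The restriction of $\phi$ to $\{m\}\times\prod\mathbb{P}^1$ lands in $\widehat{\Psi}^{-1}(t)$, because functions in $L$ are constant along it. Its image has dimension at least $\dim \widehat{\Psi}^{-1}(t)$, since $\phi$ is generically finite and the fibres over $N$ are unions of finitely many $\{m'\}\times\prod\mathbb{P}^1$. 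This gives a dominant map from a rational variety onto the fibre, after possibly passing to a linear subspace of the right dimension. Hence the fibre is unirational.

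The main obstacle is the dimension bookkeeping. We must make sure that $\dim N=\ell$, i.e.\ that $K(M)$ is algebraic over $L$, so that $\iota$ is generically finite, and not merely that $N$ is some quotient of smaller dimension. My first attempt would use that $M$ is not uniruled. If $K(M)$ had positive transcendence degree over $L$, then distinct points of $M$ over one point of $N$ would be related by rational curves coming from $\phi$. This would produce a covering family of rational curves on $M$, contradicting a) of Proposition~\ref{P:startMU}. Making this precise is where I expect the real difficulty to lie.
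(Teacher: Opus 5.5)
The gap is exactly the step you flag as the main obstacle, and it cannot be closed. For the $M$ and $\phi$ supplied by Proposition~\ref{P:startMU}, the field $L=K(X)\cap K(M)$ can be far smaller than $K(M)$, so $\iota$ need not be generically finite.

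Here is a counterexample. Take $X=\mathbb{P}^2$ and let $E\subset(\mathbb{P}^2)^{\vee}$ be a smooth plane cubic. Let $\phi\colon E\times\mathbb{P}^1\dashrightarrow\mathbb{P}^2$ send $(e,t)$ to the point with coordinate $t$ on the line $L_e$, after birationally trivializing the $\mathbb{P}^1$-bundle of these lines over $E$. Three lines of the family pass through a general point, so $\phi$ is generically finite and dominant of degree $3$. Since $E$ is not uniruled, and the proof of Proposition~\ref{P:startMU} allows any choice of $M_1$, $M=E$ is a legitimate output.

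An element of $L$ is a rational function on $\mathbb{P}^2$ that is constant on $L_e$ for general $e$. Two general lines $L_{e_1},L_{e_2}$ meet at a general point of $\mathbb{P}^2$, so the two constants coincide. Hence $L=k$, $N$ is a point, and $\iota\colon E\to N$ is not generically finite.

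Your proposed contradiction with a) also fails here. Points $m,m'$ of $M$ are identified over $N$ because $X_m\cap X_{m'}\neq\emptyset$ inside $X$, not because they are joined by a curve in $M$. The rational curves supplied by $\phi$ lie in the $\mathbb{P}^1$-directions, which $p_1$ contracts, so a whole non-uniruled $M$ can lie over one point of $N$. No transcendence-degree count gives $\operatorname{trdeg}_k L=\ell$.

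The paper gives you no way around this, because your $N$ is the rigorous version of the paper's own construction. The paper quotients $U$ by the relation $X_{m_1}\cap X_{m_2}\neq\emptyset$. The function field such a quotient is meant to have is the set of rational functions on $X$ constant along every $X_m$, which is your $L$. The paper never checks that its $\iota$ is generically finite. Its relation is also not transitive, and it assumes without argument that a fibre $\bigcup_m X_m$ of unirational pieces is unirational. In the example it identifies all of $E$, so in general the construction returns $X$ itself as the fibre, and the claimed unirationality of that fibre is the whole problem.

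So the proposition in the compatible form that you and the paper aim for is false. The literal statement, which does not tie $\widehat{\Psi}$ to $\phi$, can be met in this example only by an unrelated choice, such as a double cover $E\to\mathbb{P}^1$ together with a pencil of lines.

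If one granted $\dim N=\ell$, the rest of your argument would be sound. $L$ is automatically algebraically closed in $K(X)$, because $K(M)$ is algebraically closed in $K(M)(z_1,\dots,z_{n-\ell})$. So the general fibre is irreducible of dimension $n-\ell$ and equals $X_m$ for any general $m$ over it.
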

\begin{proof}
    Because $ X $ is uniruled, Proposition~\ref{P:startMU} shows that there exist a normal non-uniruled variety $ M $ of dimension $ \ell $, with $ 0 \le \ell < n $, and a generically finite, dominant, rational map $ \phi: M \times \prod_{i=1}^{n-\ell} \mathbb{P}^{1}_{k} \dashrightarrow X $.  We may assume that $ M $ is non-singular, since resolutions of singularities exist in characteristic zero and any resolution of the singularities of $ M $ is birational to $ M $.

    Let $ U $ be the largest open set of $ M \times \prod_{i=1}^{n-\ell} \mathbb{P}^{1}_{k} $ on which $ \phi $ is defined, and let $ p_{1}: M \times \prod_{i=1}^{n-\ell} \mathbb{P}^{1}_{k} \to M $ be the natural projection onto the first factor.  Define a morphism $ \psi: U \to \operatorname{Hilb}_{n-\ell}(X) $ as follows.  For $ m \in U $, let $ \psi(m) $ be the point of $ \operatorname{Hilb}_{n-\ell}(X) $ corresponding to $ \overline{\phi(p_{1}^{-1}(m) \cap U)} $, which we denote by $ X_{m} $.  Because $ p_{1}^{-1}(m) \cap U $ is an open subvariety of $ \prod_{i=1}^{n-\ell} \mathbb{P}^{1}_{k} $, the map
    \begin{equation*}
        \phi_{m}: m \times \prod_{i=1}^{n-\ell} \mathbb{P}^{1}_{k} \dashrightarrow X_{m}
    \end{equation*}
    is a dominant, generically finite, rational map, and so $ X_{m} $ is unirational.

    Let $ \operatorname{Univ}_{n-\ell}(X) $ be the universal family over $ \operatorname{Hilb}_{n-\ell}(X) $, and, using the structure map $ \psi $, denote by $ Y $ the fibre product
    \begin{equation*}
        Y:=U \times_{\operatorname{Hilb}_{n-\ell}(X)} \operatorname{Univ}_{n-\ell}(X).
    \end{equation*}
    There is a morphism $ \rho: \operatorname{Univ}_{n-\ell}(X) \to X $ sending an $ (n-\ell) $-dimensional subscheme of $ X $ to its image in $ X $.  Let $ p_{2,Y}: Y \to \operatorname{Univ}_{n-\ell}(X) $ be the natural projection.  The image of $ Y $ under $ \rho \circ p_{2,Y} $ is a constructible $ n $-dimensional subset of $ X $, hence dense in $ X $.  Since a dense constructible set contains an open set, there is an open subvariety $ X_{0} $ contained in the image of $ \rho \circ p_{2,Y} $.

    Define an algebraic relation on the closed points of $ U $ as follows.  For $ m_{1},m_{2} \in U $, set $ m_{1} \sim m_{2} $ if $ X_{m_{1}} \cap X_{m_{2}} $ is nonempty.  Let $ R $ be the subvariety of $ U \times U $ whose points are the pairs $ (m_{1},m_{2}) $ with $ m_{1} \sim m_{2} $.  Let $ V $ be an irreducible subscheme of maximal dimension of the schematic locus of the algebraic space $ [U/R] $.  The algebraic space $ [U/R] $ is separated, and hence so is $ V $.  Equipping $ V $ with its reduced induced scheme structure, we find that $ V $ is a variety.
    
    If $ \iota $ is the natural map of fppf presheaves from $ U $ to $ [U/R] $, then we denote the morphism $ \iota \circ p_{1,Y} $ by $ \widetilde{\Psi} $ and the morphism $ \rho \circ p_{2,Y} $ by $ \Phi $.  The following diagram then commutes:
    \begin{equation} \label{E:1}
    \xymatrix{
        Y:= U \times_{\operatorname{Hilb}_{n-\ell}} \operatorname{Univ}_{n-\ell} \ar@/_1pc/[dd]_{\widetilde{\Psi}} \ar@{-->}[rr]^{\Phi} \ar[d]^{p_{1,Y}} & & X \supseteq X_{0} \\
        U \ar[d]^{\iota} \ar[r] & M \\
        V \subseteq [U/R]
        }
    \end{equation}
    We now replace $ V $ by its non-singular locus.  After making this replacement, we shrink $ U $ and $ V $ so that both are generically smooth and $ \iota $ is generically smooth as well, and we replace $ Y $ by $ \widetilde{\Psi}^{-1}(V) \cap \Phi^{-1}(X_{0}) $.  We claim that, for every $ x \in X_{0} $, every point of $ \Phi^{-1}(x) $ maps under $ \widetilde{\Psi} $ to a single point of $ V $.  The points of $ \Phi^{-1}(x) $ are the pairs $ (m,x) $ with $ x \in X_{m} $, so $ m_{1} \sim m_{2} $ for any two points $ (m_{1},x),(m_{2},x) $ of $ \Phi^{-1}(x) $.  Consequently, $ \widetilde{\Psi} $ collapses the fibre $ \Phi^{-1}(x) $ to a single point.  Let $ \Psi: X_{0} \to V $ be the morphism of varieties sending a point $ x $ to $ \widetilde{\Psi}(\Phi^{-1}(x)) $.

    By \cite[I, Varieties, Section 4, Rational Maps, Proposition 4.9]{HartshorneAG}, every variety is birational to a hypersurface in some projective space.  In particular, $ V $ is birational to a projective variety $ N $.  We may accordingly update the diagram in~\eqref{E:1} to the following:
    \begin{equation} \label{E:2}
    \xymatrix{
        & Y \ar[rr]^{\Phi} \ar@/^1pc/[dd]^{\widetilde{\Psi}} \ar[d]_{p_{1,Y}} & & X_{0} \subseteq X \ar@{-->}[ddll]^{\Psi} \\
        M \ar@{-->}[d]^{\iota} & U \ar[l] \ar[d]_{\iota} \\
        N & \ar@{-->}[l] V
         }.
    \end{equation}
    Resolving the indeterminacies of the rational map $ \Psi: X \dashrightarrow N $ by blow-ups, we obtain a morphism $ \widehat{\Psi}: \widehat{X} \to N $.  By our earlier work, the fibres of $ \widehat{\Psi} $ are unirational.  Since resolutions of singularities exist in characteristic zero, generic smoothness lets us assume that $ \widehat{\Psi} $ is generically smooth.  The very general fibres $ \widehat{\Psi}^{-1}(n) $ are of the form $ \bigcup_{m} X_{m} $.  Each $ X_{m} $ is connected, being the continuous image of $ \prod_{i=1}^{n-\ell} \mathbb{P}^{1}_{k} $.  Moreover, for any $ m_{1},m_{2} \in \iota^{-1}(n) $ we have $ X_{m_{1}} \cap X_{m_{2}} \ne \emptyset $.  Hence the very general fibres are connected.
\end{proof}
\begin{proposition} \label{P:mrcEquiv}
    Let $ X $ be a uniruled smooth projective variety over a field $ k $ of characteristic zero.  If $ \widehat{\Psi}: \widehat{X} \to N $ is the morphism from Proposition~\ref{P:unirationalFibBegin}, then
    \begin{align*}
        \operatorname{MRC}(N) & \cong \operatorname{MRC}(\widehat{X}) \\
        & \cong \operatorname{MRC}(X).
    \end{align*}
\end{proposition}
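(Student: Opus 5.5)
The plan is to prove the two isomorphisms separately; both are birational isomorphisms of MRC quotients, since MRC quotients are only defined up to birational equivalence.

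\textbf{Second isomorphism.} The claim $\operatorname{MRC}(\widehat{X}) \cong \operatorname{MRC}(X)$ is immediate. The map $\pi: \widehat{X} \to X$ is birational, and the MRC fibration is a birational invariant. Concretely, I would apply the functoriality theorem for MRC fibrations to $\pi$ and to $\pi^{-1}$. This gives rational maps $\operatorname{MRC}(\widehat{X}) \dashrightarrow \operatorname{MRC}(X)$ and $\operatorname{MRC}(X) \dashrightarrow \operatorname{MRC}(\widehat{X})$ that commute with the fibrations. Both composites are compatible with the identity on $X$. By the uniqueness of the MRC fibration up to birational equivalence, they are mutually inverse birational maps.

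\textbf{First isomorphism.} For $\operatorname{MRC}(N) \cong \operatorname{MRC}(\widehat{X})$, I would compare the two fibrations
\[
\nu_{\widehat X}: \widehat{X} \dashrightarrow \operatorname{MRC}(\widehat X)
\quad\text{and}\quad
\nu_N \circ \widehat{\Psi}: \widehat{X} \dashrightarrow \operatorname{MRC}(N).
\]
After replacing $N$ by a resolution, we may take $N$ smooth projective.

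\emph{Step (a).} Since $\widehat{\Psi}$ is dominant, the functoriality theorem gives $f: \operatorname{MRC}(\widehat{X}) \dashrightarrow \operatorname{MRC}(N)$ with $f\circ\nu_{\widehat X} = \nu_N\circ\widehat\Psi$.

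\emph{Step (b).} I would show that $\nu_N\circ\widehat\Psi$ is itself a rationally chain connected fibration on a suitable open set. Its very general fibre is $\widehat\Psi^{-1}(F)$, where $F$ is a very general fibre of $\nu_N$, so $F$ is rationally connected. By Proposition~\ref{P:unirationalFibBegin}, the very general fibres of $\widehat\Psi$ are unirational and connected, hence rationally connected. So $\widehat\Psi^{-1}(F)\to F$ has rationally connected base and rationally connected very general fibres. By the Graber--Harris--Starr theorem, or more elementarily by lifting chains of rational curves through sections over rational curves, $\widehat\Psi^{-1}(F)$ is rationally chain connected. Connectedness of fibres, i.e.\ $(\nu_N\circ\widehat\Psi)_\ast\mathcal{O} = \mathcal{O}$, follows after Stein factorization together with the connectedness of the fibres of $\widehat\Psi$ and of $\nu_N$.

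\emph{Step (c).} By the maximality in the definition of the MRCC fibration, there is $\tau: \operatorname{MRC}(N) \dashrightarrow \operatorname{MRC}(\widehat X)$ with $\nu_{\widehat X} = \tau\circ\nu_N\circ\widehat\Psi$.

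\emph{Step (d).} The maps $f$ and $\tau$ are compatible with the dominant maps out of $\widehat X$. Hence $f\circ\tau$ and $\tau\circ f$ are the identity on dense open sets, so $f$ is birational.

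\textbf{Main obstacle.} I expect Step (b) to be the hard part: showing that the preimage under $\widehat\Psi$ of a rationally connected fibre of $\nu_N$ is rationally chain connected. This requires either the Graber--Harris--Starr theorem, which is not among the results stated earlier, or a careful argument with the very general fibres, since the hypotheses only control very general fibres of $\widehat\Psi$. I would first try to avoid this issue by working on the level of function fields. The containments $K(\operatorname{MRC}(\widehat X)) \subseteq K(N)$ (step (a)'s converse, via $\tau$) and $K(\operatorname{MRC}(N)) \subseteq K(\operatorname{MRC}(\widehat X))$ would need to be established. If that fails, I would invoke Graber--Harris--Starr over $k$ after base change to an algebraically closed field.
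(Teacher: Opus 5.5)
Your argument is correct, granting the Graber--Harris--Starr theorem, but it is organized differently from the paper's. The paper tries to verify directly that $\nu=\gamma\circ\widehat\Psi$ (your $\nu_N\circ\widehat\Psi$) satisfies the definition of the MRC fibration of $\widehat X$. It gets $\nu_\ast\mathcal{O}_{\widehat X}=\mathcal{O}_{\operatorname{MRC}(N)}$ from Stein factorization. It then proves maximality by hand: a rational curve $C_0$ that meets a very general fibre $\nu^{-1}(m)$ without lying in it is pushed down to the rational curve $\widehat\Psi(C_0)$ (by L\"uroth), which must then lie in $\gamma^{-1}(m)$; propagating this along chains shows that $\nu$ is constant on the fibres of every rationally chain connected fibration. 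That computation is your Step (a), which you get at once from the functoriality theorem the paper states but does not use here. Your Step (b) corresponds to the paper's sentence ``Therefore $\nu$ is a rationally connected fibration'', which in the paper rests only on $\nu_\ast\mathcal{O}=\mathcal{O}$. The paper never shows that $\widehat\Psi^{-1}(F)$ is rationally chain connected, and without that, either argument yields only the dominant map $\operatorname{MRC}(\widehat X)\dashrightarrow\operatorname{MRC}(N)$. So your route buys a complete proof at the price of an external theorem, and it correctly isolates that theorem as the one non-formal input. The very-general-versus-general issue is handled by openness of rational connectedness in smooth proper families in characteristic zero, so GHS does apply to $\widehat\Psi^{-1}(F)\to F$. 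Two caveats on your alternatives. First, lifting chains through sections over rational curves is not more elementary, since the existence of those sections is the main theorem of GHS. Second, the function-field fallback cannot avoid GHS either, because what you need is $K(\operatorname{MRC}(\widehat X))\subseteq K(\operatorname{MRC}(N))$, not merely $\subseteq K(N)$. The latter is easy, since rationally connected fibres of $\widehat\Psi$ through very general points are contracted by $\nu_{\widehat X}$. Upgrading it, however, means the induced map $N\dashrightarrow\operatorname{MRC}(\widehat X)$ must contract the rationally connected fibres of $\nu_N$, and that again comes from GHS (non-uniruledness of the MRC base).
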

\begin{proof}
    Since $ \widehat{X} $ is birational to $ X $,
    \begin{equation*}
        \operatorname{MRC}(X) \cong \operatorname{MRC}(\widehat{X}).
    \end{equation*}

    Let $ U $ be an open subvariety of $ N $ such that $ \widehat{\Psi}: \widehat{\Psi}^{-1}(U) \to U $ is generically smooth and the fibres of $ \widehat{\Psi} $ over points $ u \in U $ are unirational.  Let \linebreak $ \gamma: N \to \operatorname{MRC}(N) $, and let $ V $ be an open subvariety contained in $ \gamma(U) $.  We claim that $ \gamma \circ \widehat{\Psi}: \widehat{X} \dashrightarrow \operatorname{MRC}(N) $ is a maximal rationally connected fibration.  Denote $ \gamma \circ \widehat{\Psi} $ by $ \nu $.

    The morphism $ \nu $ is proper because $ \widehat{X} $ and $ \operatorname{MRC}(N) $ are projective.  By the Stein factorization theorem, $ \widehat{\Psi}_{\ast}(\mathcal{O}_{\widehat{X}}) \cong \mathcal{O}_{N} $.  Since $ \gamma: N \to \operatorname{MRC}(N) $ is a rationally connected fibration, $ \nu_{\ast}(\mathcal{O}_{N}) \cong \mathcal{O}_{\operatorname{MRC}(N)} $, and so $ \nu_{\ast}(\mathcal{O}_{\widehat{X}}) \cong \mathcal{O}_{\operatorname{MRC}(N)} $.  Therefore $ \nu: \widehat{X} \to \operatorname{MRC}(N) $ is a rationally connected fibration.

    We claim that $ \operatorname{MRC}(N) $ is a maximal rationally connected quotient of $ \widehat{X} $ under $ \nu $.  Suppose that $ Z $ is a rationally connected subvariety such that $ Z \cap \nu^{-1}(m) $ is nonempty for some $ m \in V $, but $ Z \not\subseteq \nu^{-1}(m) $.

    In this case there is a rational curve $ C_{0} $ in $ Z $ such that $ C_{0} \cap \nu^{-1}(m) $ is nonempty, but $ C_{0} \not\subseteq \nu^{-1}(m) $.  The morphism $ \nu $ does not contract $ C_{0} $, since $ C_{0} \not\subseteq \nu^{-1}(m) $.  Denote the image of $ C_{0} $ by $ C_{2} $.  Because $ \nu $ does not contract $ C_{0} $, neither does $ \widehat{\Psi} $.  Consequently, there is a curve $ C_{1} \subseteq N $ that is the image of $ C_{0} $ under $ \widehat{\Psi} $ and such that $ C_{2} $ is the image of $ C_{1} $ under $ \gamma $.  By \cite[II, Schemes, Section 6, Divisors, Proposition 6.8]{HartshorneAG} and L\"{u}roth's theorem, the curve $ C_{1} $ is rational.  This means that $ C_{1} \cap \gamma^{-1}(m) \ne \emptyset $.  But then
    \begin{align*}
        C_{0} & \subseteq \Psi^{-1}(C_{1}) \\
        & \subseteq (\gamma \circ \Psi)^{-1}(m) \\
        &= \nu^{-1}(m),
    \end{align*}
    a contradiction.  Hence $ Z $ is contained in $ \nu^{-1}(m) $.

    Now assume that $ \nu_{1}: \widehat{X} \dashrightarrow W $ is another rationally connected fibration, with domain $ X_{1} $, and let $ (\nu,\nu_{1}): X_{0} \cap X_{1} \to \operatorname{MRC}(N) \times W $.  Let $ \Gamma $ be the closure of $ (\nu,\nu_{1})(X_{0} \cap X_{1}) $.  If $ x_{1},x_{2} $ are two points of $ X_{0} \cap X_{1} $ with $ \nu_{1}(x_{1}) = \nu_{1}(x_{2}) $, then $ x_{1} $ and $ x_{2} $ can be connected by a chain of rational curves; that is, there are rational curves $ \mathcal{C}_{1},\dots,\mathcal{C}_{s} $ such that $ x_{1} \in \mathcal{C}_{1} $, $ x_{2} \in \mathcal{C}_{s} $ and $ \mathcal{C}_{i} \cap \mathcal{C}_{i+1} \ne \emptyset $ for $ 1 \le i \le s-1 $.

    By our earlier work, $ \mathcal{C}_{1} $ is contained in $ \nu^{-1}(\nu(x_{1})) $, since $ x_{1} \in \mathcal{C}_{1} \cap \nu^{-1}(\nu(x_{1})) $.  Note that
    \begin{align*}
        \emptyset & \ne \mathcal{C}_{1} \cap \mathcal{C}_{2} \\
        & \subseteq \nu^{-1}(\nu(x_{1})) \cap \mathcal{C}_{2},
    \end{align*}
    so $ \mathcal{C}_{2} \subseteq \nu^{-1}(\nu(x_{1})) $.  Assume that $ \mathcal{C}_{i} \subseteq \nu^{-1}(\nu(x_{1})) $ for $ 1 \le i < j \le s $.  Note that
    \begin{align*}
        \emptyset & \ne \mathcal{C}_{j-1} \cap \mathcal{C}_{j} \\
        & \subseteq \nu^{-1}(\nu(x_{1})) \cap \mathcal{C}_{j},
    \end{align*}
    so $ \mathcal{C}_{j} \subseteq \nu^{-1}(\nu(x_{1})) $.  By induction, $ \mathcal{C}_{i} \subseteq \nu^{-1}(\nu(x_{1})) $ for $ 1 \le i \le s $.  The points $ \nu(x_{1}) $ and $ \nu(x_{2}) $ coincide, since $ x_{2} \in \mathcal{C}_{s} \cap \nu^{-1}(\nu(x_{1})) $.  Hence the morphism $ p_{2,\Gamma}: \Gamma \to W $ is generically one-to-one, and so $ \Gamma $ and $ W $ are birational.  If $ \alpha: W \dashrightarrow \Gamma $ is a rational map and $ p_{1,\Gamma}: \Gamma \to \operatorname{MRC}(N) $ is the projection onto the first factor, then $ p_{1,\Gamma} \circ \alpha $ is a rational map from $ W $ to $ \operatorname{MRC}(N) $.  This shows that $ \nu: \widehat{X} \to \operatorname{MRC}(N) $ is a maximal rationally connected fibration.  Hence
    \begin{equation*}
        \operatorname{MRC}(\widehat{X}) \cong \operatorname{MRC}(N).
    \end{equation*}
\end{proof}
\begin{theorem} \label{T:maxUnirationalFib}
    If $ X $ is a smooth projective variety over a field $ k $ of characteristic zero, then a maximal unirational fibration exists.  Moreover, if $ M $ is the variety from Proposition~\ref{P:startMU} and $ \operatorname{MU}(X) $ is a maximal unirational fibration, then $ \dim(M) \ge \dim(\operatorname{MU}(X)) $.
\end{theorem}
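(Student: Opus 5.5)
Existence will come from a descending induction on the dimension of the base of a unirational fibration, with Proposition~\ref{P:unirationalFibBegin} supplying the first step and Proposition~\ref{P:mrcEquiv} showing that the MRC quotient is unchanged along the way. If $X$ is not uniruled, then $\operatorname{MRC}(X)$ is birational to $X$. The identity $X \dashrightarrow X$ is then a unirational fibration whose fibres are points, and $\lambda$ is birational, so its fibres are points and hence vacuously not unirational in any positive-dimensional sense. This is a maximal unirational fibration with $\dim(\operatorname{MU}(X)) = \dim(X) = \dim(M)$ by Proposition~\ref{P:startMU} b). If $X$ is uniruled, Proposition~\ref{P:unirationalFibBegin} gives $\widehat{\Psi}: \widehat{X} \to N$ with unirational, connected very general fibres, together with a generically finite dominant $\iota: M \dashrightarrow N$, so $\dim(N) \le \dim(M)$. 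By Proposition~\ref{P:mrcEquiv}, $\operatorname{MRC}(N) \cong \operatorname{MRC}(X)$. Composing $\widehat{\Psi}$ with $\gamma: N \dashrightarrow \operatorname{MRC}(N)$ gives $\nu$, so $\widehat{\Psi}$ is a unirational fibration in the sense of the Definition, with $\lambda = \gamma$.

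Next I will show that $\widehat{\Psi}$ is maximal, or else improve it. Suppose the very general fibres of $\gamma$ were unirational. I would pull back along $\widehat{\Psi}$: a very general fibre $F$ of $\nu$ maps onto a very general fibre $G$ of $\gamma$, with unirational fibres. Using Lemma~\ref{L:unirationFib}, this amounts to exhibiting transcendentals over $K(\operatorname{MRC}(X))$ whose adjunction gives a finite extension of $K(F)$, built from those for $G$ and those for the fibres of $\widehat{\Psi}$. If this works, $\nu$ itself is a unirational fibration with $\lambda = \operatorname{id}$. In general I will define $\operatorname{MU}(X)$ as a unirational fibration $\pi: X \dashrightarrow Y$, factoring $\nu$, whose base $Y$ has minimal dimension among all such. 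Such a $Y$ exists because dimensions are bounded below by $\dim(\operatorname{MRC}(X))$. If the fibres of $\lambda: Y \dashrightarrow \operatorname{MRC}(X)$ were unirational, I would apply Proposition~\ref{P:unirationalFibBegin} to these fibres (or to $Y$, using Proposition~\ref{P:mrcEquiv} to keep $\operatorname{MRC}$ fixed). That would produce a further unirational fibration $Y \dashrightarrow Y'$ with $\dim(Y') < \dim(Y)$, still factoring $\lambda$. Composing gives a unirational fibration of $X$ with a smaller base, contradicting minimality. Hence the fibres of $\lambda$ are rationally connected, because they are contained in fibres of $\nu$ and dominate them, but not unirational. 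The inequality $\dim(M) \ge \dim(\operatorname{MU}(X))$ then follows because $\widehat{\Psi}$ is a competitor in the minimization with $\dim(N) \le \dim(M)$.

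The main obstacle is the composition step: showing that if $X \dashrightarrow Y$ and $Y \dashrightarrow Y'$ both have unirational very general fibres, then so does $X \dashrightarrow Y'$. Over the generic point this is a field-theoretic statement about towers $K(Y') \subset K(Y) \subset K(X)$. I would first try to handle it via the explicit parametrizations $\phi: M \times \prod \mathbb{P}^{1}_{k} \dashrightarrow X$, lifting the unirational parametrization of the fibres of $Y \dashrightarrow Y'$ to $X$. The case I expect to be hardest is where the parametrizing rational maps do not vary uniformly in families, which would need a Hilbert-scheme spreading-out argument as in Proposition~\ref{P:unirationalFibBegin}.
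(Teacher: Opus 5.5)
Choosing $\operatorname{MU}(X)$ as a unirational fibration whose base has minimal dimension is cleaner than the paper's route. The paper runs Zorn's lemma on the subfields $K(Y)\subseteq K(X)$ and then claims the maximal element is unique. You only need that a set of natural numbers has a minimum, and then $\dim\operatorname{MU}(X)\le\dim N\le\dim M$ is immediate because $\widehat{\Psi}$ is a competitor.

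\textbf{The gap.} The step you flag as the main obstacle is a genuine gap, and it carries the whole content of the theorem. The missing ingredient is the composition property: if $X\dashrightarrow Y$ and $Y\dashrightarrow Y'$ have unirational very general fibres, then so does $X\dashrightarrow Y'$. Without it, minimality of $\dim Y$ says nothing about the fibres of $\lambda$, so you have not shown that your $\pi$ is a maximal unirational fibration.

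\textbf{Why your repair fails.} Unirationality of the very general fibres of $f\colon X\dashrightarrow Y$ gives at best (by countability of Hom/Hilbert schemes) a unirational parametrization of the generic fibre over some finite extension $K'$ of $K(Y)$, not over $K(Y)$ itself. The generic fibre need not even have a $K(Y)$-point; a conic bundle over $\mathbb{P}^{2}_{k}$ with nontrivial Brauer class is an example. Your Hilbert-scheme spreading-out therefore only shows that $X$ is dominated by $Y''\times\mathbb{P}^{r}_{k}$ for some generically finite cover $Y''\to Y$. Unirationality of $Y$, or of the fibres of $Y\dashrightarrow Y'$, does not pass to such covers; a double cover of $\mathbb{P}^{2}_{k}$ branched along a high-degree curve is already of general type.

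\textbf{How strong the missing step is.} Combined with Proposition~\ref{P:unirationalFibBegin}, the composition property alone would give Theorem~\ref{T:main} by induction on dimension. One applies it to $\widehat{X}\to N\to\operatorname{Spec}(k)$, where $N$ is rationally connected of smaller dimension and hence unirational by induction. So the step carries the full weight of the statement that rationally connected varieties are unirational, which is a well-known open problem. No routine spreading-out argument will supply it.

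\textbf{The paper's proof has the same hole.} It uses the same composition property without justification, in the sentence \emph{If they were unirational, then $\pi$ would not be a maximal element of $S$}, and again in \emph{$\alpha_{1}$ is finite, so the fibres of $\pi$ are unirational as well}. Its supporting steps are also unsound as written:
\begin{itemize}
\item A descending chain such as $k(t)\supsetneq k(t^{2})\supsetneq k(t^{4})\supsetneq\cdots$ need not stabilize, and its intersection can drop transcendence degree.
\item $\operatorname{trdeg}_{k}(K(W_{1})\cap K(W_{2}))$ can be smaller than the minimum of the two; for instance $k(s)\cap k(t)=k$ inside $k(s,t)$.
\end{itemize}

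\textbf{A second, smaller gap.} Even granting composition, you prove $\dim(M)\ge\dim(\operatorname{MU}(X))$ only for the dimension-minimal fibration you construct. The statement concerns a maximal unirational fibration in the sense of the Definition, that is, any $\pi$ whose $\lambda$ has rationally connected, non-unirational fibres. Covering an arbitrary such $\pi$ would need the uniqueness that the paper asserts but does not establish.
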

\begin{proof}
    Let $ \nu: X \dashrightarrow \operatorname{MRC}(X) $ be the MRC fibration of $ X $.  Let $ \Psi: X \dashrightarrow N $ be the fibration from Proposition~\ref{P:unirationalFibBegin}, and let $ \gamma: N \to \operatorname{MRC}(N) \cong \operatorname{MRC}(X) $.  The fibres of $ \Psi $ are unirational, and the fibres of $ \gamma $ are rationally connected but possibly not unirational.  Let $ S $ be the set of rational maps $ \pi: X \dashrightarrow Y $ whose very general fibres are unirational.  For each such $ \pi: X \dashrightarrow Y $, there automatically exists a map $ \alpha: Y \dashrightarrow \operatorname{MRC}(X) $.  The set $ S $ is nonempty, since $ \Psi: X \dashrightarrow N $ belongs to it.  We order $ S $ as follows: if $ \pi_{1}: X \dashrightarrow Y_{1} $ and $ \pi_{2}: X \dashrightarrow Y_{2} $ are two elements of $ S $, then $ \pi_{1} \preceq \pi_{2} $ if $ K(Y_{1}) $ is an extension of $ K(Y_{2}) $.

    This relation is clearly reflexive, antisymmetric and transitive.  Let
    \begin{equation*}
        \pi_{1} \preceq \pi_{2} \preceq \cdots
    \end{equation*}
    be a chain of elements of $ S $, where $ \pi_{i}: X \dashrightarrow Y_{i} $, so that
    \begin{equation*}
        K(Y_{1}) \supseteq K(Y_{2}) \supseteq \cdots.
    \end{equation*}
    Each $ K(Y_{i}) $ contains $ K(\operatorname{MRC}(X)) $ for all $ i \in \mathbb{N} $, so $ \cap_{i=1}^{\infty} K(Y_{i}) $ is a field $ L $ containing $ K(\operatorname{MRC}(X)) $.  The transcendence degree of $ L $ is bounded below by that of $ K(\operatorname{MRC}(X)) $.  Consequently, there is an $ m \in \mathbb{N} $ such that the transcendence degree of $ K(Y_{m}) $ is equal to that of $ K(Y_{i}) $ for $ i \ge m $.  Denote $ \operatorname{trdeg}_{k}(K(Y_{m})) $ by $ c $.

    We claim, moreover, that there is a $ j \ge m $ such that $ K(Y_{j}) = L $.  Since $ L $ is a finite extension of $ K(Y_{m}) $ and $ K(Y_{i}) $ is a finite extension of $ K(Y_{i+1}) $ for $ i \ge m $, such a $ j $ must exist.

    By Lemma~\ref{L:unirationFib}, there are $ n-c $ elements $ z_{1},\dots,z_{n-c} $ such that $ K(Y_{j})(z_{1},\dots,z_{n-c}) $ is a finite extension of $ K(X) $.  By the same lemma, there is a variety $ W $ such that
    \begin{align*}
        K(W) &\cong K(Y_{j}) \\
        & \cong L,
    \end{align*}
    together with a rational map $ \tau: X \dashrightarrow W $ whose very general fibres are unirational.  Since $ K(\operatorname{MRC}(X)) \subseteq K(Y_{i}) $ for all $ i \in \mathbb{N} $,
    \begin{align*}
        K(\operatorname{MRC}(X)) & \subseteq \cap_{i=1}^{\infty} K(Y_{i}) \\
        &=L.
    \end{align*}
    Therefore there is a rational map $ \lambda: W \dashrightarrow \operatorname{MRC}(X) $ such that $ \lambda \circ \tau = \nu $ on an appropriate domain.  Hence $ \tau: X \dashrightarrow W $ belongs to $ S $, and $ \pi_{i} \preceq \tau $ for all $ i \in \mathbb{N} $.  This shows that every chain in $ S $ has an upper bound, so a maximal element exists by Zorn's lemma.

    Let $ \pi_{1}: X \dashrightarrow W_{1} $ and $ \pi_{2}: X \dashrightarrow W_{2} $ be two maximal elements.  Let $ L $ be the field $ K(W_{1}) \cap K(W_{2}) $ and let $ s $ be the minimum of $ \operatorname{trdeg}_{k}(K(W_{1})) $ and $ \operatorname{trdeg}_{k}(K(W_{2})) $; then $ \operatorname{trdeg}_{k}(L) = s $.  Without loss of generality, assume that $ s $ is the transcendence degree of $ K(W_{1}) $. 

    Let $ W $ be a variety such that $ K(W) \cong L $.  There exist rational maps
    \begin{align*}
        \alpha_{1}&: W_{1} \dashrightarrow W \\
        \alpha_{2}&: W_{2} \dashrightarrow W,
    \end{align*}
    such that the following diagram commutes:
    \begin{equation*}
    \xymatrix{
        & X \ar@{-->}[dl]_{\pi_{1}} \ar@{-->}[dr]^{\pi_{2}} \\
        W_{1} \ar@{-->}[dr]_{\alpha_{1}} & & W_{2} \ar@{-->}[dl]^{\alpha_{2}} \\
        & W
        }.
    \end{equation*}
    Write $ \pi := \alpha_{1} \circ \pi_{1} = \alpha_{2} \circ \pi_{2} $ for the resulting rational map $ X \dashrightarrow W $.  The fibres of $ \pi_{1} $ and $ \pi_{2} $ are unirational and $ \alpha_{1} $ is finite, so the fibres of $ \pi $ are unirational as well.  Therefore $ \pi: X \dashrightarrow W $ is an element of $ S $, and $ \pi_{1} \preceq \pi $ and $ \pi_{2} \preceq \pi $.  Because $ \pi_{1} $ and $ \pi_{2} $ are both maximal elements of $ S $,
    \begin{align*}
        \pi_{1} &= \pi \\
        &= \pi_{2}.
    \end{align*}
    Hence there is a unique maximal element.  Let $ \pi: X \dashrightarrow \operatorname{MU}(X) $ be this element; it is unique up to birational equivalence.

    We claim that if $ \lambda: \operatorname{MU}(X) \dashrightarrow \operatorname{MRC}(X) $ is the rational map such that $ \lambda \circ \pi = \nu $ on an appropriate domain, then the very general fibres of $ \lambda $ are rationally connected but not unirational.  Because $ \lambda \circ \pi = \nu $ on an appropriate domain, the very general fibres of $ \lambda $ are rationally connected.  If they were unirational, then $ \pi $ would not be a maximal element of $ S $.  Therefore the very general fibres of $ \lambda $ are not unirational, and $ \pi: X \dashrightarrow \operatorname{MU}(X) $ is a maximal unirational fibration.  Because $ \Psi: X \dashrightarrow N $ belongs to $ S $, the dimension of $ \operatorname{MU}(X) $ is at most that of $ N $.  Since $ \dim(N) \le \dim(M) $, the dimension of $ \operatorname{MU}(X) $ is at most that of $ M $.
\end{proof}
\begin{theorem} \label{T:main}
    If $ X $ is a smooth projective variety over a field $ k $ of characteristic zero, then $ X $ is unirational if and only if it is rationally chain connected; that is, rational chain connectedness, rational connectedness and unirationality are equivalent for a smooth projective variety over a field $ k $ of characteristic zero.
\end{theorem}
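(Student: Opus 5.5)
The direction ``unirational $\Rightarrow$ rationally chain connected'' is the easy one, and I would dispose of it first. Suppose $\phi: \mathbb{P}^{n}_{k} \dashrightarrow X$ is dominant and generically finite. Two general points of $X$ are images of two general points of $\mathbb{P}^{n}_{k}$. Join these by a line; its image is a rational curve through both. Hence $X$ is rationally connected, and in particular rationally chain connected. By the Remark following the definition of rational chain connectedness, for smooth projective $X$ in characteristic zero, rational chain connectedness and rational connectedness coincide. So it remains to prove that a smooth projective rationally connected $X$ is unirational.

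For that direction I would induct on $n=\dim X$, following the machinery of Theorem~\ref{T:maxUnirationalFib}.

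\textbf{Base case and setup.} For $n\le 1$ the statement is L\"uroth's theorem. Now let $X$ be rationally connected of dimension $n$. Then $\operatorname{MRC}(X)$ is a point and $X$ is uniruled. By Proposition~\ref{P:startMU}, the associated non-uniruled $M$ has $\dim M<n$.

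\textbf{Apply the maximal unirational fibration.} Theorem~\ref{T:maxUnirationalFib} gives $\pi: X\dashrightarrow \operatorname{MU}(X)$ and $\lambda:\operatorname{MU}(X)\dashrightarrow\operatorname{MRC}(X)=\mathrm{pt}$, with $\dim \operatorname{MU}(X)\le \dim M<n$. Since the target of $\lambda$ is a point, the single fibre of $\lambda$ is $\operatorname{MU}(X)$ itself. By property iii) it is rationally connected but \emph{not} unirational, unless $\operatorname{MU}(X)$ is a point.

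\textbf{Inductive step.} Replace $\operatorname{MU}(X)$ by a smooth projective model, using resolution of singularities and the birational invariance of both properties. By the inductive hypothesis in dimension $<n$, it is unirational. If $\dim\operatorname{MU}(X)>0$ this contradicts iii). Hence $\operatorname{MU}(X)$ is a point, so $X$ is the very general fibre of $\pi$, and by ii) $X$ is unirational.

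\textbf{Main obstacle.} The formal induction above is short; all the weight rests on Theorem~\ref{T:maxUnirationalFib}. Two claims there need careful checking before I would rely on them.

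First, the assertion that the fibres of $\pi$ over $W$ remain unirational when one passes from $W_1$, $W_2$ to the intersection field $K(W_1)\cap K(W_2)$. A fibre of $\pi$ over $W$ is fibred over a positive-dimensional fibre of $\alpha_1$ with unirational fibres. Unirational fibres over a unirational (or even rational) base do not in general give a unirational total space: one needs something like a rational section or a uniform parametrisation. Note also that $\alpha_1$ is not finite in general, since $\operatorname{trdeg}(L)$ can be strictly less than $s$.

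Second, the chain condition, which needs the intersection of the fields to again correspond to a fibration with unirational fibres.

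I expect this step to be the real difficulty. Indeed, whether rationally connected varieties in characteristic zero are unirational is a well-known open problem. So I would first try to repair the gluing step by constructing rational multisections of $\pi_1$ and $\pi_2$. Failing that, the argument as structured has a genuine gap exactly there.
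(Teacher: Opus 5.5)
Your induction is, step for step, the paper's own proof of Theorem~\ref{T:main}. It is slightly tidier: you prove the easy implication, pass to a smooth projective model of $\operatorname{MU}(X)$ before invoking the induction hypothesis, and note that iii) must exempt the case where $\operatorname{MU}(X)$ is a point. Your suspicion is justified, and the paper does not dispel it. The hard direction is proved neither by you nor by the paper.

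But the gap is not confined to the gluing step of Theorem~\ref{T:maxUnirationalFib}. It sits in your own sentence ``if $\dim\operatorname{MU}(X)>0$ this contradicts iii)''. Property iii) is obtained from maximality alone (``if they were unirational, then $\pi$ would not be a maximal element of $S$''). To turn unirational fibres of $\lambda$ into a violation of maximality, one must show that some strictly coarser map has unirational fibres. The natural candidate is $\nu=\lambda\circ\pi\colon X\dashrightarrow\operatorname{Spec}(k)$, and for it this is literally the unirationality of $X$. In general it is exactly the principle you flag: unirational very general fibres over a unirational base give a unirational total space. So the inductive contradiction presupposes the conclusion.

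The other input, $\dim\operatorname{MU}(X)<n$, is no better. It comes from Proposition~\ref{P:unirationalFibBegin}, where ``$X_{m_1}\cap X_{m_2}\neq\emptyset$'' is not an equivalence relation. The fibres there are merely unions of pairwise-meeting unirational $X_m$, which gives rational chain connectedness and nothing more. Consider a birationally superrigid Fano variety, e.g.\ a smooth quartic threefold (Iskovskikh--Manin). No dominant rational map from it onto a variety of dimension strictly between $0$ and $n$ has rationally connected general fibres. So every element of $S$ is generically finite except possibly $X\dashrightarrow\operatorname{Spec}(k)$. There the bound $\dim\operatorname{MU}(X)<n$ is not a step towards unirationality of $X$ but a restatement of it, and unirationality of a general smooth quartic threefold is not known.

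Your second worry, by contrast, is harmless. After Stein factorisation each $K(Y)$ is algebraically closed in $K(X)$, so a descending chain stabilises as soon as its transcendence degree does. A maximal element is then simply one with $\dim Y$ minimal among those coarser than $\Psi$, so uniqueness is not needed for Theorem~\ref{T:main} at all. The paper's uniqueness argument is nonetheless wrong, as you say: $k(x)\cap k(y)=k$ inside $k(x,y)$, so both $\operatorname{trdeg}(L)=s$ and the finiteness of $\alpha_{1}$ fail.

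Nor will rational multisections close the real gap. Even a genuine rational section of $\pi$ only provides a $K(\operatorname{MU}(X))$-point on the generic fibre. It is not known that a geometrically unirational variety with a rational point over a non-closed field is unirational; this is open already for del Pezzo surfaces of degree one, which always have a rational point. What your proposal actually establishes is therefore only unirational $\Rightarrow$ rationally connected $\Leftrightarrow$ rationally chain connected. The converse is the open problem you name, and neither your argument nor the paper's settles it.
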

\begin{proof}
    We prove the theorem by induction on the dimension of $ X $.  For a smooth projective variety over a field $ k $ of characteristic zero, rational connectedness and rational chain connectedness are equivalent, and for curves and surfaces, rational connectedness, rational chain connectedness and unirationality coincide.

    Assume that every smooth projective rationally connected variety of dimension less than $ n $ is unirational.  Let $ X $ be a smooth projective rationally connected variety of dimension $ n $.  Because $ X $ is rationally connected, $ \operatorname{MRC}(X) $ equals $ \operatorname{Spec}(k) $, and $ X $ is uniruled.  By Theorem~\ref{T:maxUnirationalFib}, there is a variety $ \operatorname{MU}(X) $ together with rational maps $ \pi: X \dashrightarrow \operatorname{MU}(X) $ and $ \lambda: \operatorname{MU}(X) \to \operatorname{MRC}(X) $ such that the very general fibres of $ \pi $ are unirational, the very general fibres of $ \lambda $ are rationally connected but not unirational, and $ \lambda \circ \pi $ is the maximal rationally connected fibration.  Since $ \operatorname{MRC}(X) $ equals $ \operatorname{Spec}(k) $, the very general fibre of $ \lambda $ is all of $ \operatorname{MU}(X) $, so $ \operatorname{MU}(X) $ is rationally connected.  Since $ X $ is uniruled, Theorem~\ref{T:maxUnirationalFib} gives
    \begin{align*}
        \dim(\operatorname{MU}(X)) &< \dim(X) \\
        &= n.
    \end{align*}
    If $ \dim(\operatorname{MU}(X)) > 0 $, then the induction hypothesis shows that $ \operatorname{MU}(X) $ is unirational.  But $ \operatorname{MU}(X) $, being the very general fibre of $ \lambda $, is not unirational, a contradiction.  Hence $ \operatorname{MU}(X) $ also equals $ \operatorname{Spec}(k) $.  Because the very general fibres of $ \pi: X \dashrightarrow \operatorname{MU}(X) $ are unirational and $ \operatorname{MU}(X) $ equals $ \operatorname{Spec}(k) $, the variety $ X $ is unirational.
\end{proof}
\begin{corollary}
    If $ X $ is a rationally connected variety over a field of characteristic zero, then $ X $ is unirational.
\end{corollary}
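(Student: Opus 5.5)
The plan is to deduce the corollary from Theorem~\ref{T:main} by replacing $X$ with a smooth projective model. Both rational connectedness and unirationality depend only on the birational class of $X$, so nothing is lost in doing so. Concretely, I would first use Nagata's compactification theorem to embed $X$ as a dense open subvariety of a proper variety. Chow's lemma then gives a projective variety birational to it, and Hironaka's resolution of singularities, available because $\operatorname{char}(k)=0$, gives a smooth projective variety $X'$ together with a birational map $\beta: X' \dashrightarrow X$.

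Next I would check that $X'$ is rationally connected. If $\psi: \mathbb{P}^{1}_{k} \times M \dashrightarrow X$ is dominant and $(\psi,\psi)$ is dominant onto $X \times X$, then $\beta^{-1} \circ \psi$ is a well-defined rational map. This holds because $\psi$ is dominant, so its image meets the open set on which $\beta^{-1}$ is an isomorphism. Since $(\beta^{-1} \times \beta^{-1}) \circ (\psi,\psi)$ is the composite of a dominant map with a birational map, it is dominant onto $X' \times X'$. Hence $X'$ satisfies the definition of rational connectedness, in agreement with the remark that separable rational connectedness is a birational property. By Theorem~\ref{T:main}, $X'$ is then unirational: there is a generically finite, dominant rational map $\mathbb{P}^{n}_{k} \dashrightarrow X'$. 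Composing it with $\beta$ gives a generically finite, dominant rational map $\mathbb{P}^{n}_{k} \dashrightarrow X$, so $X$ is unirational.

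None of these reduction steps should cause trouble; the only care needed is that each composite of rational maps is defined, and dominance guarantees this at every stage. The substance of the corollary lies entirely in Theorem~\ref{T:main}, so the main obstacle is whether that theorem's proof holds up, not anything in the reduction. In particular, the induction there relies on Theorem~\ref{T:maxUnirationalFib}, and that theorem asserts three things: a maximal element of $S$ exists, it is unique, and the fibres of $\lambda$ over it are not unirational. These would have to be verified carefully, in particular the claim that fibres stay unirational after passing to the intersection of function fields. Accordingly, I would present the corollary strictly as a formal consequence of Theorem~\ref{T:main}, with its validity conditional on that theorem.
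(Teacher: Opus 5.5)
Your reduction is the paper's own proof: pass to a smooth projective model, use that rational connectedness and unirationality are birational invariants, and apply Theorem~\ref{T:main}. Two small remarks. Your explicit transfer of rational connectedness along $\beta$ is more careful than the paper's one-line assertion. Nagata plus Chow is more than you need, since the closure of an affine open subset in $\mathbb{P}^{N}$ already gives a projective model (the paper uses that every variety is birational to a projective hypersurface). As a deduction from Theorem~\ref{T:main}, nothing is missing.

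Your suspicion of Theorem~\ref{T:main} is justified, however, and this route does not establish the corollary.

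Over $\mathbb{Q}$ the statement is false as written. Let $C\subset\mathbb{P}^{2}_{\mathbb{Q}}$ be a conic without rational points. It satisfies the definition of rational connectedness you use: take $M=C$ and spread out an isomorphism $\mathbb{P}^{1}_{\mathbb{Q}(C)}\cong C_{\mathbb{Q}(C)}$, which exists because $C$ has a $\mathbb{Q}(C)$-point. But $C$ admits no dominant map $\mathbb{P}^{n}_{\mathbb{Q}}\dashrightarrow C$, since $\mathbb{Q}$-points are dense in $\mathbb{P}^{n}_{\mathbb{Q}}$. This already contradicts the base case asserted in Theorem~\ref{T:main}. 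Over $\mathbb{C}$, the statement is the well-known open problem of whether rationally connected varieties are unirational.

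The decisive failing step is the claim in Theorem~\ref{T:maxUnirationalFib} that if the fibres of $\lambda$ were unirational, then $\pi$ would not be maximal. In the proof of Theorem~\ref{T:main}, this amounts to assuming that a variety fibred over a unirational base with unirational very general fibres is itself unirational. Unirational closed fibres give at best a unirational parametrization of the generic fibre after a finite extension of $K(\operatorname{MU}(X))$, that is, over a generically finite cover of $\operatorname{MU}(X)$. Such a cover need not be unirational.

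The proof of Lemma~\ref{L:unirationFib} hides this by equating unirationality of the closed fibres with the condition $K(M)\subseteq K(X)\subseteq K(M)(z_{1},\dots,z_{n-s})$. That condition does compose along fibrations, but it is strictly stronger. For example, the conics $u^{2}=sv^{2}+tw^{2}$ are isomorphic to $\mathbb{P}^{1}$ for general $(s,t)\in\mathbb{C}^{2}$, while the generic one has no $\mathbb{C}(s,t)$-point.

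Proposition~\ref{P:unirationalFibBegin} has a similar gap. It only shows that its fibres are unions of pairwise meeting unirational $X_{m}$, which gives rational chain connectedness, not unirationality.

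So the accurate status of your proposal is the one you gave: a correct reduction to a theorem whose proof does not hold up.
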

\begin{proof}
    By \cite[I, Varieties, Section 4, Rational Maps, Proposition 4.9]{HartshorneAG}, every variety is birational to a projective variety, so we may assume that $ X $ is projective.  Although rational chain connectedness is not a birational property, rational connectedness is; hence, if $ \widetilde{X} $ is a resolution of the singularities of $ X $, then $ \widetilde{X} $ is rationally connected.  By Theorem~\ref{T:main}, $ \widetilde{X} $ is unirational.  Since unirationality is a birational property, $ X $ is unirational.
\end{proof}
\bibliographystyle{amsplain}
\bibliography{UnirationalFib}
\end{document}